\newtheorem{lem}{Lemma}[section]
\newtheorem{defi}{Definition}[section]
\newtheorem{def/not}{Definition/Notations}[section]
\newtheorem{thm}{Theorem}[section]
\newtheorem{exa}{Example}[section]
\numberwithin{equation}{section}
\begin{document}




\title[FIXED POINT THEOREME IN C*-ALGEBRAS VALUED ASYMMETRIC spaces ]{ FIXED POINT THEOREME IN C*-ALGEBRAS VALUED ASYMMETRIC SPACES}
\author[Ouafaa BOUFTOUH and Samir KABBAJ ]{Ouafaa.BOUFTOUH(*) and Samir.KABBAJ}
\renewcommand{\thefootnote}{}
\footnotetext{$(^\ast)$\, \textbf{Corresponding author} .
\\ \textsl{ Mathematics Subject Classification}. 47H10, 54H25.}
\maketitle

\begin{center}
Department of Mathematics, Faculty of Sciences, University of Ibn Tofail, BP 133 Kenitra, Morocco.  \\
e-mails: ouafaa.bouftouh@uit.ac.ma, samkabbaj@yahoo.fr

\end{center}\par

\vspace{.5cm}
\centerline {}
\textbf{Abstract :}
In this work, we introduce the concept of $C^{*}$-algebra valued asymetric metric space, the concept of forward and the concept of backward $C^{*}$ valued asymetric contractions.  We discuss the existence and uniqueness of fixed points for a self-mapping defined on a $C^{*}$-algebra valued asymetric, and we give an application.
\vspace{.5cm}
\\

\textbf{Keywords:} C*algebras valued asymmetric spaces,forward and backward convergence,fixed point theorem.
\vspace{.5cm}
\\

\section{Introduction }\label{section:introduction}

The scientific starting point of the fixed point theory was set up in the 20th centry.The fundamental outcume of this thoery is the Picard-Banach-Caccioppoli contraction principle which brought into crucial and relevant fields of research: the theory of functionnal equations, integral equations, physic, economy, ...
Many researchers have dealt whith the theory of fixed point in two ways: the first affirms the conditions on the mapping where as the second takes the set as a more general structure.

Indeed the fixed point theorem is established in several case such as asymmetric metric spaces which generalize metric spaces.
 These spaces are introduced by Wilson \cite{wilson1931quasi} \label{wilson1931quasi} and have been studied by J.Collins and J.Zimmer \cite{asymmetric2007} \label{asymmetric2007}. Other interesting results in asymmetric metric spaces have also been demonstrated by  Amin-
pour, Khorshidvandpour and Mousavi\cite{aminpour2012some} \label{aminpour2012some}.
This research has contributed to interesting applications, for example in rate-independent plasticity models \cite{mainik2005existence} \label{mainik2005existence},
shape memory alloys \cite{mielke2003rate} \label{mielke2003rate},  material failure models \cite{rieger2005young} \label{rieger2005young}. In  mathematics, we find other applications such as the study of asymmetric metric spaces to prove the existence and uniqueness of Hamilton-Jacobi equations \cite{mennucci2004asymmetric} \label{mennucci2004asymmetric}.

Recently, in a more general context, Zhenhua Ma, Lining Jiang and Hongkai Sun  introduced the notion of $C^{*}$-algebra valued metric spaces and analogous to the Banach contraction principle and established a fixed point theorem for $C^{*}$-valued contraction mappings \cite{ma2014onclick} \label{ma2014onclick}. These results were generalized by Samina Batul1 and Tayyab Kamran in \cite{batul2015} \label{batul2015}  by introducing the concept of $C^{*}$ valued contractive type condition.

In this paper, we first introduce the notion of $C^{*}$-algebra valued asymetric spaces and we establish a fixed point theorem analogous to the results presented in \cite{batul2015} \label{batul2015}. Some examples are provided to illustrate our results. Finally, existence and uniqueness results for a type of operator equation is given.

\section{PRELIMINARIES}
	
In this section, we give some basic definitions.  $\mathbb{A}$ will denote a unital $\mathrm{C}^{*}$ -algebra with a unit $I_{\mathbb{A}}$. An involution on $\mathbb{A}$ is a conjugate linear map $a \mapsto a^{*}$ on $\mathbb{A}$  such that \\
$
a^{* *}=a \text { and }(a b)^{*}=b^{*} a^{*}$  for all $a$ and $b$ in $ \mathbb{A}$. 
 
 A Banach *-algebra is a algebra 
provided with a involution and a complete multiplicative norm such that $\left\|a^{*}\right\|=\|a\| $ for all $a$  in $ \mathbb{A}$.

 A $\mathrm{C}^{*}$ -algebra is a Banach $*$ -algebra such that $\left\|a^{*} a\right\|=\|a\|^{2}$. $ \mathbb{A}_{h}$ will denote the set of all self-adjoint elements $a$ (i.e., satisfying $a^{*}=a$ ), and $\mathbb{A}^{+}$ will be the set of positive elements of $\mathbb{A}$, i.e., the elements $a \in \mathbb{A}_{h}$ having the spectrum $\sigma(a)$ contained in $[0,+\infty)$. Note that $\mathbb{A}^{+}$ is a (closed) cone in the normed space $\mathbb{A}$  \cite{murphy2014c} \label{murphy2014c}, wich infers  a partial order $\preceq$ on $\mathbb{A}_{h}$ by $a \preceq b$ if and only if $b-a \in \mathbb{A}^{+}$.
When $\mathbb{A}$ is a unital $\mathrm{C}^{*}$ -algebra, then for any $x \in \mathbb{A}_{+}$ we have $|x|=(x * x)^{\frac{1}{2}}$.
 We will use the following  results.

\begin{lem} \cite{murphy2014c} \label{murphy2014c}
 Suppose that $\mathbb{A}$ is a unital $C^{*}$ -algebra with a unit $I_{\mathbb{A}}$

\begin{enumerate}
\item $\mathbb{A}^{+}=\left\{a^{*} a: a \in \mathbb{A}\right\}$ ;
\item  if $a, b \in \mathbb{A}_{h}, a \preceq b,$ and $c \in \mathbb{A},$ then $c^{*} a c \preceq c^{*} b c $;
\item for all $a, b \in \mathbb{A}_{h},$ if $0_{\mathbb{A}} \preceq a \preceq b$ then $\|a\| \leq\|b\|$;
\item  $ 0\preceq a \preceq I_{\mathbb{A}} \Leftrightarrow\|a\| \leq 1$.
\end{enumerate}
\end{lem}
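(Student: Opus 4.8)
The plan is to prove the four items in the order in which they are listed, since (2) rests on (1), (3) is an essentially independent spectral observation, and (4) is a quick corollary of (3). The analytic core — and the single genuine obstacle — is the nontrivial inclusion in (1), namely that $a^{*}a \succeq 0_{\mathbb{A}}$ for every $a \in \mathbb{A}$. Everything else is routine once the continuous functional calculus for self-adjoint elements and the spectral permanence identity $\sigma(xy)\cup\{0\} = \sigma(yx)\cup\{0\}$ are available; in particular I will freely use that $t^{2}\succeq 0_{\mathbb{A}}$ for $t\in\mathbb{A}_{h}$ and that a positive element has nonnegative spectrum, both immediate from functional calculus.

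For (1), the inclusion $\mathbb{A}^{+}\subseteq\{a^{*}a:a\in\mathbb{A}\}$ is immediate: if $b\succeq 0_{\mathbb{A}}$, functional calculus produces a self-adjoint square root $b^{1/2}$ with $b=(b^{1/2})^{*}b^{1/2}$. For the reverse inclusion I would run the classical argument. Fix $a\in\mathbb{A}$ and set $h=a^{*}a\in\mathbb{A}_{h}$; split $h=u-v$ with $u,v\succeq 0_{\mathbb{A}}$ and $uv=vu=0_{\mathbb{A}}$ via functional calculus, and put $b=av$. A direct computation gives $b^{*}b=v a^{*}a v=v(u-v)v=-v^{3}\preceq 0_{\mathbb{A}}$, using $vuv=0_{\mathbb{A}}$ and $v^{3}\succeq 0_{\mathbb{A}}$. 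Writing $b=x+iy$ with $x,y\in\mathbb{A}_{h}$, the identity $b^{*}b+bb^{*}=2x^{2}+2y^{2}$ forces $bb^{*}=2x^{2}+2y^{2}+v^{3}\succeq 0_{\mathbb{A}}$. Since $\sigma(b^{*}b)\setminus\{0\}=\sigma(bb^{*})\setminus\{0\}$ while $\sigma(b^{*}b)\subseteq(-\infty,0]$ and $\sigma(bb^{*})\subseteq[0,+\infty)$, we get $\sigma(b^{*}b)\subseteq\{0\}$, hence $\|b^{*}b\|=0$ and $v^{3}=0_{\mathbb{A}}$; as $v\succeq 0_{\mathbb{A}}$ this gives $\sigma(v)=\{0\}$, so $v=0_{\mathbb{A}}$ and $a^{*}a=u\succeq 0_{\mathbb{A}}$. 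The delicate point is to arrange the steps so that the only positivity invoked before (1) is proved is that of squares $x^{2}$ of self-adjoint elements and of powers $v^{3}$ of the already-positive $v$; invoking ``$z^{*}z\succeq 0_{\mathbb{A}}$'' prematurely would make the argument circular.

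The remaining items are short. For (2): from $a\preceq b$ we get $b-a\succeq 0_{\mathbb{A}}$, so by (1) write $b-a=d^{*}d$; then $c^{*}bc-c^{*}ac=c^{*}(b-a)c=(dc)^{*}(dc)\succeq 0_{\mathbb{A}}$ by (1) again. For (3): a positive element $t$ satisfies $\|t\|=\sup\sigma(t)$ and $t\preceq\|t\|\,I_{\mathbb{A}}$ (functional calculus), so $0_{\mathbb{A}}\preceq a\preceq b\preceq\|b\|\,I_{\mathbb{A}}$ yields $\|b\|\,I_{\mathbb{A}}-a\succeq 0_{\mathbb{A}}$, whence $\|b\|-\lambda\ge 0$ for every $\lambda\in\sigma(a)$ and therefore $\|a\|=\sup\sigma(a)\le\|b\|$. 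For (4): ``$\Rightarrow$'' is (3) applied with $b=I_{\mathbb{A}}$, giving $\|a\|\le\|I_{\mathbb{A}}\|=1$; ``$\Leftarrow$'', assuming $a\succeq 0_{\mathbb{A}}$ and $\|a\|\le 1$, one has $\sigma(a)\subseteq[0,1]$, so $\sigma(I_{\mathbb{A}}-a)\subseteq[0,1]\subseteq[0,+\infty)$ with $I_{\mathbb{A}}-a$ self-adjoint, hence $I_{\mathbb{A}}-a\succeq 0_{\mathbb{A}}$, i.e.\ $a\preceq I_{\mathbb{A}}$.
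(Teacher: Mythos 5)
Your proof is correct, but note that the paper itself offers no proof of this lemma at all: it is quoted verbatim from Murphy's book \cite{murphy2014c} and used as a black box, so there is nothing internal to compare against. What you have written is essentially the classical textbook argument. The only genuinely hard point, as you identify, is $a^{*}a\succeq 0_{\mathbb{A}}$ in (1), and your route is the standard Kaplansky-style one: split $a^{*}a=u-v$ with $uv=vu=0_{\mathbb{A}}$, show $b^{*}b=-v^{3}\preceq 0_{\mathbb{A}}$ for $b=av$, get $bb^{*}=2x^{2}+2y^{2}+v^{3}\succeq 0_{\mathbb{A}}$ from the Cartesian decomposition, and kill $v$ with the spectral permanence identity $\sigma(b^{*}b)\cup\{0\}=\sigma(bb^{*})\cup\{0\}$. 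One small point worth making explicit: the step $2x^{2}+2y^{2}+v^{3}\succeq 0_{\mathbb{A}}$ (and the transitivity used in (3)) silently uses that $\mathbb{A}^{+}$ is closed under addition; this is not a consequence of (1) but of the separate cone property, which the paper records in its preliminaries (and which is itself proved via the characterization $\lVert t\,I_{\mathbb{A}}-a\rVert\le t$ for self-adjoint $a$), so your circularity concern is indeed handled — but the additivity of the cone deserves a citation or a line of proof rather than tacit use. Your treatment of (2), (3), (4) via (1), the square root, and elementary spectral mapping is exactly how the cited source proceeds, with the same sensible reading of (4) as asserting the equivalence for positive $a$.
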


\begin{lem} \cite{murphy2014c} \label{murphy2014c}
 Suppose that $\mathbb{A}$ is a unital $C^{*}$ -algebra with a unit $I_{\mathbb{A}}$.
 \begin{enumerate}

\item If $a \in \mathbb{A}_{+}$ with $\|a\|<\frac{1}{2},$ then $I_{\mathbb{A}}-a$ is invertible and $\left\|a(I_{\mathbb{A}}-a)^{-1}\right\|<1$;
\item suppose that $a, b \in \mathbb{A}$ with $a, b \succeq 0_{\mathbb{A}}$ and $a b=b a,$ then $a b \succeq 0_{\mathbb{A}}$;
\item by $\mathbb{A}^{\prime}$ we denote the set $\{a \in \mathbb{A}: a b=b a, \forall b \in \mathbb{A}\}$. Let $a \in \mathbb{A}^{\prime},$ if $b, c \in \mathbb{A}$ with
$b \succeq c \succeq 0_{\mathbb{A}}$ and $I_{\mathbb{A}}-a \in \mathbb{A}_{+}^{\prime}$ is a invertible operator, then
$$
(I_{\mathbb{A}}-a)^{-1} b \succeq(I_{\mathbb{A}}-a)^{-1} c.
$$
 \end{enumerate}
\end{lem}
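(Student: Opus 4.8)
The three assertions are standard facts about a unital $C^{*}$-algebra; here is how I would organise a self-contained argument.

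For part (1) the plan is to use the Neumann series. Since $\|a\|<\tfrac12<1$, the series $\sum_{n=0}^{\infty}a^{n}$ is absolutely convergent in norm, and a direct telescoping computation shows it is a two-sided inverse of $I_{\mathbb{A}}-a$; hence $I_{\mathbb{A}}-a$ is invertible with $(I_{\mathbb{A}}-a)^{-1}=\sum_{n=0}^{\infty}a^{n}$. Multiplying on the left by $a$ and using submultiplicativity of the norm gives
\[
\bigl\|a(I_{\mathbb{A}}-a)^{-1}\bigr\|\le\sum_{n=1}^{\infty}\|a\|^{n}=\frac{\|a\|}{1-\|a\|}<1,
\]
where the final strict inequality is precisely a restatement of the hypothesis $\|a\|<\tfrac12$.

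For part (2) I would reduce to the commutative case. Since $a$ and $b$ are self-adjoint and commute, the closed $*$-subalgebra they generate together with $I_{\mathbb{A}}$ is a commutative unital $C^{*}$-algebra, hence isometrically $*$-isomorphic to some $C(X)$, $X$ compact Hausdorff, by Gelfand duality; positivity is preserved in both directions under such an isomorphism. Under it $a$ and $b$ correspond to nonnegative continuous functions, so $ab$ corresponds to a nonnegative continuous function and therefore $ab\succeq 0_{\mathbb{A}}$. (Alternatively, note that $a^{1/2}$ is a norm-limit of polynomials in $a$, hence commutes with $b$, so $ab=a^{1/2}ba^{1/2}=(b^{1/2}a^{1/2})^{*}(b^{1/2}a^{1/2})\in\mathbb{A}^{+}$ by Lemma~2.1(1).)

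For part (3) I would combine the previous ideas with Lemma~2.1. Since $I_{\mathbb{A}}-a$ is self-adjoint, positive and invertible, its spectrum is a compact subset of $(0,+\infty)$; consequently $(I_{\mathbb{A}}-a)^{-1}$ is self-adjoint with spectrum in $(0,+\infty)$, so $(I_{\mathbb{A}}-a)^{-1}\succeq 0_{\mathbb{A}}$. Moreover the inverse of a central invertible element is again central, so $(I_{\mathbb{A}}-a)^{-1}\in\mathbb{A}'$. By hypothesis $b-c\succeq 0_{\mathbb{A}}$, and it commutes with $(I_{\mathbb{A}}-a)^{-1}$ because the latter is central; applying part (2) to the commuting positive elements $(I_{\mathbb{A}}-a)^{-1}$ and $b-c$ yields $(I_{\mathbb{A}}-a)^{-1}(b-c)\succeq 0_{\mathbb{A}}$, that is, $(I_{\mathbb{A}}-a)^{-1}b\succeq(I_{\mathbb{A}}-a)^{-1}c$. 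The computations here are all routine; the only point needing a little care is part (2), namely the justification that commutativity really can be exploited (via Gelfand, or via the functional calculus giving $a^{1/2}$), after which parts (1) and (3) are immediate.
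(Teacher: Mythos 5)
Your three arguments are correct: the Neumann series gives invertibility and the bound $\|a(I_{\mathbb{A}}-a)^{-1}\|\le \|a\|/(1-\|a\|)<1$ exactly because $\|a\|<\tfrac12$; the commuting-positives claim follows either by Gelfand duality on the commutative $C^{*}$-subalgebra generated by $a,b,I_{\mathbb{A}}$ or by the square-root factorization $ab=(b^{1/2}a^{1/2})^{*}(b^{1/2}a^{1/2})$; and part (3) correctly reduces to part (2) once you observe that $(I_{\mathbb{A}}-a)^{-1}$ is positive (spectral mapping) and central (the inverse of a central invertible element is central), applied to $b-c\succeq 0_{\mathbb{A}}$. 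Note that the paper itself gives no proof of this lemma --- it is quoted from the cited reference --- so there is nothing to compare against; your write-up is the standard textbook argument and fills that gap correctly.
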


\section{Main results}
To begin with, let us start from some basic definitions.

\begin{defi}
   Let $X$ be a nonempty set. Suppose the mapping $d: X \times X \rightarrow \mathbb{A}$ satisfies:
\begin{enumerate}
\item  $0_{\mathbb{A}} \preceq d(x, y)$ for all $x, y \in X$ and $d(x, y)=0_{\mathbb{A}} \Leftrightarrow x=y ;$
\item $d(x, y) \preceq d(x, z)+d(z, y)$ for all $x, y, z \in X$.
\end{enumerate}
 Then $d$ is called a $C^{*}$-algebra valued asymetric metric on $X$ and $(X, \mathbb{A}, d)$ is called a $C^{*}$-algebra valued asymetric space.
\end{defi}
It is obvious that $C^{*}$ -algebra-valued asymetric  spaces generalize the concept of $C^{*}$ -algebra valued metric spaces.

\begin{exa}
Let $\mathbb{A}= M_{2 \times 2}(\mathbb{R})$ and $X=\mathbb{R}$ \text{. Define }
$d: \mathbb{R}\times \mathbb{R}\rightarrow M_{2 \times 2}(\mathbb{R})$ by

$$ d(x,y)=
 \left\{
    \begin{array}{ll}
   
\left[\begin{array}{ll}
x-y & 0 \\
0 & 0
\end{array}\right] & \mbox{if } x \geqslant y \\
\\
 \left[\begin{array}{ll}
0 &  0 \\
0 &  y-x
\end{array}\right]  & \mbox{if } x < y
    \end{array}
\right.$$

with $
\|\left[\begin{array}{ll}
x_{1} & x_{2} \\
x_{3} & x_{4}
\end{array}\right]\|=\left(\sum\limits_{i=1}^4 \left|x_{i}\right|^{2}\right)^{\frac{1}{2}}
$
where $x_{i} $ are real numbers . Then $(X, \mathbb{A} , d)$ is a $C^{*}$
algebra valued asymetric metric space, where
 partial ordering on $\mathbb{A}_{+}$ is given as
$$
\left[\begin{array}{ll}
x_{1} & x_{2} \\
x_{3} & x_{4}
\end{array}\right]\succeq\left[\begin{array}{ll}
y_{1} & y_{2} \\
y_{3} & y_{4}
\end{array}\right] \Leftrightarrow x_{i} \geq y_{i}\geq 0 \text { for } i=1,2,3,4
$$
\end{exa}

It's clear that  $0_{\mathbb{A}} \preceq d(x, y)$ for all $x, y \in X$ and $d(x, y)=0_{\mathbb{A}} \Leftrightarrow x=y .$\\
We will verify trianguler inequality.
Let $x,y$ et $z$ in $\mathbb{R}$ then we have six cases:

\begin{enumerate}

\item  let $ x \leqslant y $  : $d(x,y)=  \left[\begin{array}{ll}
0 &  0 \\
0 &  y-x
\end{array}\right]$
\begin{enumerate}

 \item  If $x \leqslant y \leqslant z $ : \\
 $ d(x,z)+d(z,y)=   \left[\begin{array}{ll}
0 &  0 \\
0 &  z-x
\end{array}\right]+ \left[\begin{array}{ll}
z-y &  0 \\
0 &  0
\end{array}\right]= \left[\begin{array}{ll}
z-y &  0 \\
0 &  z-x
\end{array}\right]\succeq d(x,y)$.

\item If $ x \leqslant z \leqslant y $ :
   
 $ d(x,z)+d(z,y)=   \left[\begin{array}{ll}
0 &  0 \\
0 &  z-x
\end{array}\right]+ \left[\begin{array}{ll}
0 &  0 \\
0 & y-z
\end{array}\right]= \left[\begin{array}{ll}
0 &  0 \\
0 & y-x
\end{array}\right]\succeq d(x,y) $.

\item If  $z \leqslant x \leqslant y $ :\\
 $ 
 d(x,z)+d(z,y)=   \left[\begin{array}{ll}
x-z &  0 \\
0 &  0
\end{array}\right]+ \left[\begin{array}{ll}
0 &  0 \\
0 & y-z
\end{array}\right]= \left[\begin{array}{ll}
x-z &  0 \\
0 & y-z
\end{array}\right]\succeq d(x,y)$.
\end{enumerate}
\item Let $x\geqslant y$ then $d(x,y)=  \left[\begin{array}{ll}
x-y &  0 \\
0 &  0
\end{array}\right]$

\begin{enumerate}
\item  If $ x\geqslant y \geqslant z$ :\\
$d(x,z)+d(z,y)=   \left[\begin{array}{ll}
x-z &  0 \\
0 &  0
\end{array}\right]+ \left[\begin{array}{ll}
0 &  0 \\
0 & y-z
\end{array}\right]= \left[\begin{array}{ll}
x-z &  0 \\
0 & y-z
\end{array}\right]\succeq d(x,y)$.

\item If $ x\geqslant z \geqslant y$  : \\
$ d(x,z)+d(z,y)=   \left[\begin{array}{ll}
x-z &  0 \\
0 &  0
\end{array}\right]+ \left[\begin{array}{ll}
z-y &  0 \\
0 & 0
\end{array}\right]= \left[\begin{array}{ll}
x-y &  0 \\
0 & 0
\end{array}\right] \succeq d(x,y)$.
\item If  $ z\geqslant x \geqslant y $ : \\
$ 
d(x,z)+d(z,y)=   \left[\begin{array}{ll}
0 &  0 \\
0 &  z-x
\end{array}\right]+ \left[\begin{array}{ll}
z-y &  0 \\
0 & 0
\end{array}\right]= \left[\begin{array}{ll}
z-y &  0 \\
0 & z-x
\end{array}\right]\succeq d(x,y).
$
\end{enumerate}
\end{enumerate}
Note that $d(1,2)\neq d(2,1)$.
\bigskip

\begin{exa}
 Let $\mathbb{A}=L^{\infty}(\mathbb{R})$ and $X=\mathbb{R}.$\text{ Define }
$
 d: X \times X \rightarrow L^{\infty}(\mathbb{R})\text { by }$ 
$ d(x, y)=f_{x,y} $

$$
\begin{array}{l}

f_{x,y}: \mathbb{R} \rightarrow \mathbb{R}  \\

\end{array}
$$
$$ f_{x,y}(t)=
 \left\{
    \begin{array}{ll}\left( x-y \right)t \quad if  x \geqslant y\\
   \left( y-x\right)\dfrac{T-t}{T} \quad if  x < y
       
    \end{array}
\right.$$
where  $T\in \mathbb{R}^{+}$ and $f_{x,y}$ is a T-periodic function, we have:
\end{exa}

\begin{enumerate}
\item  $0_{\mathbb{A}} \preceq d(x, y)$ for all $x, y \in X$ and $d(x, y)=0_{\mathbb{A}} \Leftrightarrow x=y ;$
\item We will verify trianguler inequality.
Let $x,y$ and $z$ in $\mathbb{R}$. For $ t \in \left[0,T\right[ $ we have six cases:

\begin{enumerate}
\item If $x \leqslant y \leqslant z $ :

$\left\{
    \begin{array}{ll}
    d(x,y)(t) = f_{x,y}(t) = \left( y-x\right)\dfrac{T-t}{T} \\
    d(x,z)(t)+d(z,y)(t)= \left( z-x\right)\dfrac{T-t}{T}+ \left( z-y\right)t\succeq \left( y-x\right)\dfrac{T-t}{T}=d(x,y)(t)
    \end{array}
\right.$

\item If $z \leqslant x \leqslant z \leqslant y$ :

$\left\{
    \begin{array}{ll}
    d(x,y)(t)=  \left( y-x\right)\dfrac{T-t}{T}\\
    d(x,z)(t)+d(z,y)(t)= \left( x-z\right)t+ \left( y-z\right)\dfrac{T-t}{T} \succeq \left( y-x\right)\dfrac{T-t}{T}=d(x,y)(t) 
    \end{array}
\right.$
  
  \item If $x \leqslant z \leqslant y$ :

$\left\{
    \begin{array}{ll}
    d(x,y)(t)=  \left( y-x\right)\dfrac{T-t}{T}\\
    d(x,z)(t)+d(z,y)(t)=  \left( z-x\right)\dfrac{T-t}{T}+  \left( y-z\right)\dfrac{T-t}{T}= \left( y-x\right)\dfrac{T-t}{T} \succeq d(x,y)(t)
    \end{array}
\right.$

\item If $y \leqslant x \leqslant z $ :

$\left\{
    \begin{array}{ll}
    d(x,y)(t)= f_{x,y}(t)=  \left( x-y\right)t\\
    d(x,z)(t)+d(z,y)(t)= \left( z-x\right)\dfrac{T-t}{T}+ \left( z-y\right)t\succeq \left( x-y\right)t =d(x,y)(t)
    \end{array}
\right.$

\item If $z \leqslant y  \leqslant x$ :

$\left\{
	\begin{array}{ll}
	d(x,y)(t)=  \left( x-y\right)t  \\
	d(x,z)(t)+d(z,y)(t)= \left( x-z\right)t+ \left( y-z\right)\dfrac{T-t}{T} \succeq  \left(x-y\right)t=d(x,y)(t)
	\end{array}
\right.$

  \item If $y \leqslant z \leqslant x$ :

$\left\{
    \begin{array}{ll}
    d(x,y)(t)=  \left(x-y\right)t  \\
    d(x,z)(t)+d(z,y)(t)=  \left( x-z\right)t+  \left( z-y\right)t= \left( x-y\right)t \preceq d(x,y)(t)
    \end{array}
\right.$\\

Note that $d(\dfrac{T}{2},0)(t)= \dfrac{T}{2}t$ and  $d(0,\dfrac{T}{2})(t)= \dfrac{T-t}{2}$ for all $t\in\left[0,T\right[$

\end{enumerate}

\end{enumerate}

In what follows,we define in the same way the forward convergence and the backward convergence in \cite{asymmetric2007}\label{asymmetric2007} but in a more general context.
 
\begin{defi}
Let $(X,d, \mathbb{A})$ be a $C^{*}$ -algebra valued asymetric  space, $x \in X$ and $\left\{x_{n}\right\}$ a sequence in $X .$
 \begin{enumerate}
 \item one say $\left\{x_{n}\right\}$  forward  converges to $x$ with respect to $\mathbb{A}$ and we write $x_{k} \stackrel{f}{\rightarrow} x$, if and only if for given $\epsilon \succ 0_{\mathbb{A}}$, there exists $k \in \mathbb{N}$ such that  for all $n \geqslant k $ $$d\left(x, x_{n}\right) \preceq \epsilon.$$ 
 
 \item one say  $\left\{x_{n}\right\}$  backward converges to $x$ 
   with respect to $\mathbb{A} $ and we write $x_{n} \stackrel{b}{\rightarrow} x$, if and only if for given $\epsilon \succ 0_{\mathbb{A}}$, there exists $k \in \mathbb{N}$ such that  for all $n \geqslant k $ $$d\left(x_{n},x\right) \preceq \epsilon.$$
 \item one say $\left\{x_{n}\right\}$   converges to $x$ if $\left\{x_{n}\right\}$  forward  converges and  backward converges to $x$.
 \end{enumerate}
 \end{defi}
 
 \begin{exa}
 $d: \mathbb{R} \times \mathbb{R} \rightarrow \mathbb{R}$ defined by
$$
d(x, y)=\left\{\begin{array}{ll}
y-x & \text { if } y \geqslant x \\
1 & \text { if } y<x
\end{array}\right.
$$

Let $x \in \mathbb{R^{+}}$ and let $x_{n}=x\left(1+\frac{1}{n}\right)$. Then $x_{n} \stackrel{f}{\rightarrow} x$ but $x_{n} \stackrel{b}{\nrightarrow} x$. This example asserts  that the existence of a forward  limit does not imply the existence of a backward limit.
\end{exa}

\begin{lem}  Let  $(X, \mathbb{A}, d)$  a $C^{*}$-algebra valued asymetric  space.  If $\left\{x_{n}\right\}_{n}$ forward converges to $x \in X$ and backward converges to $y \in X,$ then $x=y .$
\end{lem}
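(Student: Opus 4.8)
The plan is to apply the triangle inequality of Definition~3.1(2) with the sequence terms $x_n$ as intermediate points, use the two convergence hypotheses to make the resulting bound on $d(x,y)$ arbitrarily small, and then convert this order estimate into a vanishing norm estimate by means of Lemma~2.1(3).

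Concretely, I would fix an arbitrary $\epsilon\succ 0_{\mathbb{A}}$; since $\mathbb{A}^{+}$ is a cone, we also have $\frac{1}{2}\epsilon\succ 0_{\mathbb{A}}$. Forward convergence of $\{x_n\}$ to $x$ supplies an index $k_1$ with $d(x,x_n)\preceq\frac{1}{2}\epsilon$ for all $n\ge k_1$, and backward convergence to $y$ supplies an index $k_2$ with $d(x_n,y)\preceq\frac{1}{2}\epsilon$ for all $n\ge k_2$. For any fixed $n\ge\max\{k_1,k_2\}$, Definition~3.1(2) applied with intermediate point $x_n$, together with the fact that $\mathbb{A}^{+}$ is closed under addition, yields
$$0_{\mathbb{A}}\preceq d(x,y)\preceq d(x,x_n)+d(x_n,y)\preceq \frac{1}{2}\epsilon+\frac{1}{2}\epsilon=\epsilon .$$
This is exactly the point at which the asymmetric setting forces the hypothesis to pair forward convergence with $d(x,x_n)$ and backward convergence with $d(x_n,y)$, so that both summands on the right are simultaneously controlled.

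Since $\epsilon\succ 0_{\mathbb{A}}$ was arbitrary, I would finally specialize to $\epsilon=\frac{1}{n}I_{\mathbb{A}}$ for each $n\in\mathbb{N}$ (a legitimate test element, being positive and nonzero), obtaining $0_{\mathbb{A}}\preceq d(x,y)\preceq \frac{1}{n}I_{\mathbb{A}}$ for every $n$. By Lemma~2.1(3) this gives $\|d(x,y)\|\le\frac{1}{n}\|I_{\mathbb{A}}\|=\frac{1}{n}$ for all $n$, hence $\|d(x,y)\|=0$, hence $d(x,y)=0_{\mathbb{A}}$, and then $x=y$ by Definition~3.1(1). I do not anticipate a genuine obstacle; the only step deserving a moment's care is this last archimedean-type passage from ``$d(x,y)\preceq\epsilon$ for every $\epsilon\succ 0_{\mathbb{A}}$'' to ``$d(x,y)=0_{\mathbb{A}}$'', which is handled cleanly by the concrete null sequence $\frac{1}{n}I_{\mathbb{A}}$ of strictly positive elements together with the norm-monotonicity in Lemma~2.1(3), once one checks that $\frac{1}{2}\epsilon$ and $\frac{1}{n}I_{\mathbb{A}}$ are again admissible ``$\succ 0_{\mathbb{A}}$'' elements.
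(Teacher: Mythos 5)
Your argument is correct and follows essentially the same route as the paper's proof: split $\epsilon$ in half, use forward convergence to control $d(x,x_n)$ and backward convergence to control $d(x_n,y)$, apply the triangle inequality through $x_n$, and conclude $d(x,y)=0_{\mathbb{A}}$ since $\epsilon$ was arbitrary. Your only addition is to make the final "arbitrary $\epsilon$ implies $d(x,y)=0_{\mathbb{A}}$" step explicit via the test elements $\frac{1}{n}I_{\mathbb{A}}$ and Lemma~2.1(3), a detail the paper leaves implicit.
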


\begin{proof}
 Fix $\varepsilon\succ 0_{\mathbb{A}}. $ By assumption, $x_{n} \stackrel{f}{\rightarrow} x$ so there exists $N_{1} \in \mathbb{N}$ such that $d\left(x, x_{n}\right)\preceq \dfrac{\varepsilon}{2}$ for all $n \geqslant N_{1} .$ Also, $x_{n} \stackrel{b}{\rightarrow}$
$y,$ so there exists $N_{2} \in \mathbb{N}$ such that $d\left(x_{n}, y\right)\preceq \dfrac{\varepsilon}{2}$ for all $n \geqslant N_{2}$. Then for all $n \geqslant N:=\max \left\{N_{1}, N_{2}\right\}, 
d\left(x, y\right) \preceq$
$d\left(x, x_{n}\right)+d\left(x_{n}, y\right)\preceq \varepsilon .$ As $\varepsilon$ was arbitrary, we deduce that $d\left(x, y\right)=0,$ which implies $x=y$
\end{proof}
 \begin{defi}
 Let  $(X, \mathbb{A}, d)$  a $C^{*}$-algebra valued asymetric  space  and $\left\{x_{n}\right\}_{n}$ a sequence in $  X$.
 \begin{enumerate}
 
  \item One say that $\left\{x_{n}\right\}$  forward  Cauchy  sequence (with respect to $\mathbb{A}$ ), if for given $\epsilon\succ 0_{\mathbb{A}}$, there exists $k$ belonging to $ \mathbb{N}$ such that   for all $n> p \geqslant k$
  $$d\left(x_{p}, x_{n}\right) \preceq \epsilon. $$
  \item One say that $\left\{x_{n}\right\}$  backward Cauchy  sequence (with respect to $\mathbb{A}$ ), if for given $\epsilon\succ 0_{\mathbb{A}}$,  for all $n> p\geqslant k$
   $$d\left(x_{n}, x_{p}\right) \preceq \epsilon.$$
 \end{enumerate}
\end{defi}

\begin{defi}
Let $(X,d, \mathbb{A})$ a $C^{*}$ -algebra valued asymetric   space . $X$ is said to be forward (backward) complete if every forward (backward) Cauchy sequence $\left\{x_{n}\right\}_{n \in \mathbb{N}}$ in X, forward (backward) converges to $x \in X$.
\end{defi}

\begin{defi}Let $(X,d, \mathbb{A})$ a $C^{*}$ -algebra valued asymetric  space. $X$ is said to be complete if $\mathrm{X}$ is forward and backward complete.
\end{defi}

\begin{exa}

 we take the example  (3.2) ,  $\left( \mathbb{R}, L^{\infty}(\mathbb{R}), d\right) $ is a complete $C^{*}$ -algebra  valued asymetric  space.

Indeed, it suffices to verify the completeness. Let $\left\{x_{n}\right\}$ in $\mathbb{R}$ be a Cauchy sequence with respect to $L^{\infty}(\mathbb{R})$. Then for a given $\varepsilon>0$, there is a natural number $N$ such that for all $n, p \geq N$
$$
\left\|d\left(x_{n}, x_{p}\right)\right\|_{\infty}= \Vert f_{x_{n},x_{p}}\Vert_{\infty}< \varepsilon ,
$$
since
$$
\Vert f_{x_{n},x_{p}}\Vert_{\infty}=\left\{\begin{array}{ll}
\left( x_{n}-x_{p}\right) T & \text { if } x_{n} \geqslant x_{p} \\
\left( x_{p}-x_{n}\right)  & \text { if }  x_{p}>x_{n}
\end{array}\right.$$
then $\left\{x_{n}\right\}$ is a Cauchy sequence in the space $\mathbb{R}$. Thus, there is  x in $ \mathbb{R}$ 
such that $\left\{x_{n}\right\}$ converges to $x$.
For $\epsilon > 0$
there exists number $k$ belonging to $\mathbb{N}$ such that $\vert x_{n}-x \vert \leq \varepsilon$ if $n \geq k$. It follows that :
$$
 \left\|d\left(x, x _{n}\right)\right\|_{\infty} \vee\left\|d\left(x_{n}, x\right)\right\|_{\infty} \leq \varepsilon\, max\left\lbrace  1,T\right\rbrace $$
Therefore, the sequence $\left\{x_{n}\right\}$ converges to  $x$ in $\mathbb{R}$ with respect to $L^{\infty}(\mathbb{R}),$ that is, $(\mathbb{R}, L^{\infty}(\mathbb{R}), d)$ is complete with respect to $L^{\infty}(\mathbb{R})$
 \end{exa}
 
\begin{defi} Let $(X,d, \mathbb{A})$ be $C^{*}$ -algebra valued asymetric space. A mapping $T: X \rightarrow X$ is said forward (respectively backward)  $C^{*}$ -algebra-valued contractive mapping on $X,$ if there exists  $a$ in $\mathbb{A}$ with $\|a\|<1$ such that
$$ d(T x, T y) \preceq a^{*} d(x, y) a,  $$
$$( \textrm{respectively }~ d(T x, T y) \preceq a^{*} d(y, x)a  )$$
for each $x,y\in X.$ 
\end{defi}

\begin{exa}
Let $\mathbb{A}= M_{2 \times 2}(\mathbb{R})$ and $X=\mathbb{R}$. \text{Define}
$d: \mathbb{R}\times \mathbb{R}\rightarrow M_{2 \times 2}(\mathbb{R})$ by

$$ d(x,y)=
 \left\{
    \begin{array}{ll}
   
\left[\begin{array}{ll}
x-y & 0 \\
0 & 0
\end{array}\right] & \mbox{if } x \geqslant y \\
\\
 \left[\begin{array}{ll}
0 &  0 \\
0 &\dfrac{1}{4} ( y-x)
\end{array}\right]  & \mbox{if } x < y
    \end{array}
\right.$$

 Then $(X, \mathbb{A} , d)$ is a $C^{*}$
algebra valued asymetric metric space, where the norme and the
 partial ordering on $\mathbb{A^{+}}$ are given as example 3.1

 Consider $T: X \rightarrow X$ by $T x=\frac{1}{4} x .$ Then ,
$$ d(Tx,Ty)=d(\dfrac{1}{4}x,\dfrac{1}{4}y)
 \left\{
    \begin{array}{ll}
   
\left[\begin{array}{ll}
\dfrac{1}{4}(x-y) & 0 \\
0 & 0
\end{array}\right] & \mbox{if } x \geqslant y \\
\\
 \left[\begin{array}{ll}
0 &  0 \\
0 &\dfrac{1}{16} ( y-x)
\end{array}\right]  & \mbox{if } x < y
    \end{array}
\right.$$
 it follows that
 $$ d(Tx,Ty)\preceq a^{*} d(x,  y) a.$$
 Indeed
 $$ d(Tx,Ty)=
 \left\{
    \begin{array}{ll}
   
\left[\begin{array}{ll}
\dfrac{1}{2} & 0 \\
0 & \dfrac{1}{2}
\end{array}\right] 
   
\left[\begin{array}{ll}
x-y & 0 \\
0 & 0
\end{array}\right]
   
\left[\begin{array}{ll}
\dfrac{1}{2} & 0 \\
0 & \dfrac{1}{2}
\end{array}\right]= a^{*} d(x,  y) a& \mbox{if } x \geqslant y \\
\\ \left[\begin{array}{ll}
\dfrac{1}{2} & 0 \\
0 & \dfrac{1}{2}
\end{array}\right] \left[\begin{array}{ll}
0 &  0 \\
0 &\dfrac{1}{4} ( y-x)
\end{array}\right] \left[\begin{array}{ll}
\dfrac{1}{2} & 0 \\
0 & \dfrac{1}{2}
\end{array}\right]= a^{*} d(x,  y) a & \mbox{if } x < y
    \end{array}
\right.$$

where
$$
a=\left[\begin{array}{ll}
\frac{1}{\sqrt{3}} & 0 \\
0 & \frac{1}{\sqrt{3}}
\end{array}\right]
$$
wich verify 
$$
\|a\|=\frac{\sqrt{2}}{\sqrt{3}}<1
$$ 
\end{exa}

 Next, we prove asymmetric version of Banach's contraction principle.

\begin{thm}
 If $(X, \mathbb{A}, d)$ is a  complete $C^{*}$ -algebra-valued asymetric space and $T$ is a forward contractive mapping, there exists a unique fixed point in $X$.
\end{thm}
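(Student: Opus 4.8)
The plan is to run the Picard iteration familiar from the Banach contraction principle, keeping track of the fact that $d$ need not be symmetric and treating the asymmetry only at the moment the fixed point is identified.

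I would first fix $x_{0}\in X$ and set $x_{n+1}=Tx_{n}$. Put $b=d(x_{0},x_{1})$ and $c=d(x_{1},x_{0})$, both in $\mathbb{A}^{+}$ by the first axiom. Applying the forward contraction inequality to consecutive iterates and using Lemma~2.1(2) (conjugation by $a$ preserves $\preceq$), an induction on $n$ yields $d(x_{n},x_{n+1})\preceq (a^{*})^{n}b\,a^{n}$ and $d(x_{n+1},x_{n})\preceq (a^{*})^{n}c\,a^{n}$. Then, for $n>p$, telescoping via the triangle inequality gives $0_{\mathbb{A}}\preceq d(x_{p},x_{n})\preceq \sum_{k=p}^{n-1}(a^{*})^{k}b\,a^{k}$; passing to norms with Lemma~2.1(3), together with $\|a^{*}\|=\|a\|$ and submultiplicativity, gives $\|d(x_{p},x_{n})\|\leq \|b\|\sum_{k\geq p}\|a\|^{2k}=\|b\|\,\|a\|^{2p}/(1-\|a\|^{2})$, which tends to $0$ since $\|a\|<1$. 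The same computation with $c$ in place of $b$ bounds $\|d(x_{n},x_{p})\|$. Hence $\{x_{n}\}$ is both forward Cauchy and backward Cauchy.

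By completeness, $\{x_{n}\}$ forward converges to some $x^{*}$ and backward converges to some $y^{*}$, and Lemma~3.1 forces $x^{*}=y^{*}$; call this common limit $x^{*}$. To show it is fixed I would estimate, for each $n$, $0_{\mathbb{A}}\preceq d(Tx^{*},x^{*})\preceq d(Tx^{*},x_{n+1})+d(x_{n+1},x^{*})$. Here $d(Tx^{*},x_{n+1})=d(Tx^{*},Tx_{n})\preceq a^{*}d(x^{*},x_{n})a$, so $\|d(Tx^{*},x_{n+1})\|\leq \|a\|^{2}\|d(x^{*},x_{n})\|\to 0$ by forward convergence, while $\|d(x_{n+1},x^{*})\|\to 0$ by backward convergence; hence $\|d(Tx^{*},x^{*})\|=0$, so $d(Tx^{*},x^{*})=0_{\mathbb{A}}$ and $Tx^{*}=x^{*}$. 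Uniqueness is then immediate: if $Tx^{*}=x^{*}$ and $Ty^{*}=y^{*}$ then $d(x^{*},y^{*})=d(Tx^{*},Ty^{*})\preceq a^{*}d(x^{*},y^{*})a$, so $\|d(x^{*},y^{*})\|\leq \|a\|^{2}\|d(x^{*},y^{*})\|$, which forces $d(x^{*},y^{*})=0_{\mathbb{A}}$.

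The step I expect to be the main obstacle is identifying the limit as a fixed point. In a symmetric metric space one would argue that $\{x_{n+1}\}$ converges to both $Tx^{*}$ and $x^{*}$ and invoke uniqueness of limits; but here a forward limit need not be unique, so that shortcut is unavailable. The remedy is to have proved that $\{x_{n}\}$ is \emph{both} forward and backward Cauchy: forward completeness plus the contraction controls $d(Tx^{*},x_{n+1})$, backward completeness controls $d(x_{n+1},x^{*})$, and the two limits are reconciled through Lemma~3.1. A minor point to pin down is that ``Cauchy'' and ``converges'' should be read in the norm sense (as already used in Example~3.4), so that the geometric-series bounds above genuinely deliver convergence.
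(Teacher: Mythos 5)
Your proposal is correct and follows essentially the same route as the paper: Picard iteration, the inductive bound $d(x_n,x_{n+1})\preceq (a^{*})^{n}d(x_0,x_1)a^{n}$, telescoping plus the geometric series in $\|a\|^{2}<1$ to get forward and backward Cauchyness, completeness, the two-term estimate $d(Tx^{*},x^{*})\preceq d(Tx^{*},Tx_{n})+d(x_{n+1},x^{*})$ to identify the fixed point, and the norm inequality $\|d(x^{*},y^{*})\|\leq\|a\|^{2}\|d(x^{*},y^{*})\|$ for uniqueness. The only (welcome) difference is cosmetic: you make explicit, via Lemma~3.1, that the forward and backward limits coincide, a point the paper passes over by simply saying the sequence ``converges,'' and you pass to norms slightly earlier than the paper's bound by a multiple of $I_{\mathbb{A}}$.
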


\begin{proof}
 Thus without loss of generality, one can suppose that $\mathbb{A} \neq 0_{\mathbb{A}}$. Choose $x \in X$   .

Notice that in a $C^{*}$ -algebra, if $a, b \in \mathbb{A}_{+}$ and $a \preceq b$, then for any $x \in \mathbb{A}$ both $x^{*} a x$ and $x^{*} b x$ are positive elements and $x^{*} a x \preceq x^{*} b x$. Thus
$$
\begin{aligned}
d\left(T^{n+1}x, T^{n}x\right) &=d\left(T(T^{n}x), T( T^{n-1}x)\right)\\
 &\preceq a^{*} d\left(T^{n}x, T^{n-1}x\right) a \\
& \preceq\left(a^{*}\right)^{2} d\left(T^{n-1}x, T^{n-2}x\right) a^{2} \\
& \preceq \cdots \\
& \preceq\left(a^{*}\right)^{n} d\left(Tx, x\right) a^{n} .\\
\end{aligned}
$$
Take $n+1>p$
$$
\begin{aligned}
d\left(T^{n+1}x, T^{p}x\right) & \preceq d\left(T^{n+1}x, T^{n}x\right)+d\left(T^{n}x, T^{n-1}x\right)+\cdots+d\left(T^{p+1}x, T^{p}x\right) \\
&\preceq\sum_{k=p}^{n}\left(a^{*}\right)^{k} d\left(Tx, x\right) a^{k}\\
&=\sum_{k=p}^{n}\left(a^{*}\right)^{k} d\left(Tx, x\right)^{\frac{1}{2}} d\left(Tx, x\right))^{\frac{1}{2}} a^{k}\\
&=\sum_{k=p}^{n}\left(d\left(Tx, x\right)^{\frac{1}{2}} a^{k}\right)^{*}\left(d\left(Tx, x\right)^{\frac{1}{2}} a^{k}\right)\\
&=\sum_{k=p}^{n}\left|d\left(Tx, x\right)^{\frac{1}{2}} a^{k}\right|^{2}\\
&\preceq\left\|\sum_{k=p}^{n}\left|d\left(Tx, x\right)^{\frac{1}{2}} a^{k}\right|^{2}\right\| I_{\mathbb{A}}\\
&\preceq\left\|d\left(Tx, x\right)^{\frac{1}{2}}\right\|^{2} \sum_{k=p}^{n}\|a\|^{2 k} I_{\mathbb{A}}\\
& \preceq \left\|d\left(Tx, x\right)^{\frac{1}{2}}\right\|^{2} \frac{\|a\|^{2 p}}{1-\|a\|^{2}} I_{\mathbb{A}} \rightarrow 0_{\mathbb{A}} \quad(p \rightarrow \infty)
\end{aligned}
$$

in the same way we prove $$d\left(T^{p}x, T^{n+1}x \right)\preceq \left\|d\left(x, Tx\right)^{\frac{1}{2}}\right\|^{2} \frac{\|a\|^{2 p}}{1-\|a\|^{2}} I_{\mathbb{A}} \rightarrow 0_{\mathbb{A}} \quad(p \rightarrow \infty) $$\\

Therefore $\left\{x_{n}\right\}$ is a forward and backward Cauchy sequence. By the completeness of $(X, \mathbb{A}, d)$, there exists an $x_{0} \in X$ such that $\left\{T^{n}x\right\}$ converges to $x_{0}$ with respect to $ \mathbb{A}$.
 
One has
$$
\begin{aligned}
\theta & \preceq d(T x_{0},  x_{0}) \preceq d\left(T  x_{0}, T ^{n+1}x\right)+d\left(T ^{n+1}x, x_{0}\right) \\
& \preceq a^{*} d\left(x_{0}, T^{n}x \right) a+d\left(T ^{n+1}x, x_{0}\right) \rightarrow 0_{\mathbb{A}} \quad(n \rightarrow \infty)
\end{aligned}
$$
hence, $T x_{0}=x_{0},$ therefore $x_{0}$ is a fixed point of $T$.\\
 Now suppose that $y(\neq x_{0})$ is another fixed point of $T$, since
$$
\theta \preceq d(x_{0}, y)=d(T x_{0}, T y) \preceq a^{*} d(x_{0}, y) a
$$
we have
$$
\begin{aligned}
0 & \leq\|d(x_{0}, y)\|=\| d(T x_{0}, T y)\| \\
& \leq\left\|a^{*} d(x_{0}, y) a\right\| \\
& \leq\left\|a^{*}\right\|\|d(x_{0}, y)\|\|a\| \\
&=\|a\|^{2}\|d(x_{0}, y)\| \\
&<\|d(x_{0}, y)\|
\end{aligned}
$$
It is impossible. So $d(x_{0}, y)=\theta$ and $x_{0}=y,$ which implies that the fixed point is unique.
\end{proof}

\begin{defi}

 (Forward T-orbitally lower semi-continuous)
A function $G: X \rightarrow \mathbb{A}$ is said to be forward $T$ -orbitally lower semi continuous at $x_{0}$ with
respect to $\mathbb{A}$ if the sequence $\left\{x_{n}\right\}$ in $\mathcal{O}_{T}(x)$ is such that $x_{n} \stackrel{f}{\rightarrow} x$ with
respect to $\mathbb{A}$ implies
$$
\left\|G\left(x_{0}\right)\right\| \leqslant \liminf \left\|G\left(x_{n}\right)\right\| 
$$
where $\mathcal{O}_{T}(x)= \lbrace T^{n}x\quad \vert \quad n \in \mathbb{N}\rbrace $.
\end{defi}

\begin{defi}

 (Forward Contractive Type Mapping) Let $(X, \mathbb{A}, d)$ be a $C^{*}$ -algebra valued asymetric space. A mapping $T: X \rightarrow X$ is said
to be a forward $C^{*}$ -valued contractive type mapping if there exists an $x \in X$ and an $a \in \mathbb{A}$
such that
$$d\left( T y,T ^{2}y\right) \preceq a^{*} d(y, Ty) a$$ with $\|a\|<1$ for every $y \in \mathcal{O}_{T}(x).$
\end{defi}

\begin{thm}
Let $(X, \mathbb{A}, d)$ be a forward complete $C^{*}$ -algebra valued asymetric space and $T: X \rightarrow X$ be a forward
$C^{*}$ -algebra valued contractive type mapping. 

Then
\begin{enumerate}
\item $\exists x_{0} \in X$ such that the sequence $T^{n} x$ in $\mathcal{O}_{T}(x)$ forward  converges to $x_{0}$,
\item $x_{0}$ is a fixed point of $T$ if and only if the map $G(x)=d(x,Tx)$
is forward $T$ -orbitally lower semicontinuous at $x_{0}$ with respect to $\mathbb{A}$.
\end{enumerate}
\end{thm}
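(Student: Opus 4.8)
The plan is to run the argument of Theorem 3.1 restricted to the orbit $\mathcal{O}_{T}(x)$ and using only the forward structure. Fix the point $x\in X$ and the element $a\in\mathbb{A}$ with $\|a\|<1$ supplied by the hypothesis, and set $x_{n}=T^{n}x$, so that $\{x_{n}\}\subseteq\mathcal{O}_{T}(x)$. Applying the contractive type inequality with $y=x_{n}\in\mathcal{O}_{T}(x)$ gives
$$d(x_{n+1},x_{n+2})=d(Tx_{n},T^{2}x_{n})\preceq a^{*}\,d(x_{n},Tx_{n})\,a=a^{*}\,d(x_{n},x_{n+1})\,a,$$
and iterating this bound, each step being legitimate by Lemma 2.1(2), yields
$$d(x_{n},x_{n+1})\preceq (a^{*})^{n}\,d(x,Tx)\,a^{n}\qquad(n\in\mathbb{N}).$$

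Next I would estimate $d(x_{p},x_{n})$ for $n>p$ verbatim as in the proof of Theorem 3.1: the triangle inequality telescopes
$$d(x_{p},x_{n})\preceq\sum_{k=p}^{n-1}d(x_{k},x_{k+1})\preceq\sum_{k=p}^{n-1}(a^{*})^{k}\,d(x,Tx)\,a^{k}=\sum_{k=p}^{n-1}\big|d(x,Tx)^{\frac{1}{2}}a^{k}\big|^{2};$$
dominating this finite sum of positive elements by its norm times $I_{\mathbb{A}}$ (using that $0\preceq c\preceq\|c\|\,I_{\mathbb{A}}$, by Lemma 2.1), using $\|b^{*}b\|=\|b\|^{2}$ and $\|a^{k}\|\le\|a\|^{k}$, and summing the geometric series $\sum_{k\ge p}\|a\|^{2k}$ (convergent since $\|a\|<1$) gives
$$d(x_{p},x_{n})\preceq \big\|d(x,Tx)^{\frac{1}{2}}\big\|^{2}\,\frac{\|a\|^{2p}}{1-\|a\|^{2}}\,I_{\mathbb{A}}\ \longrightarrow\ 0_{\mathbb{A}}\qquad(p\to\infty).$$
Hence $\{x_{n}\}$ is a forward Cauchy sequence, and forward completeness produces $x_{0}\in X$ with $x_{n}\stackrel{f}{\rightarrow}x_{0}$; this is assertion (1). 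Note that, unlike in Theorem 3.1, no backward estimate is available here, since the contractive type inequality controls $d(x_{n},x_{n+1})$ but not $d(x_{n+1},x_{n})$ — which is precisely why forward completeness is the right hypothesis.

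For (2), if $Tx_{0}=x_{0}$ then $G(x_{0})=d(x_{0},Tx_{0})=d(x_{0},x_{0})=0_{\mathbb{A}}$ by axiom (1), so $\|G(x_{0})\|=0\le\liminf\|G(y_{n})\|$ for every orbit sequence $\{y_{n}\}$ forward convergent to $x_{0}$, i.e. $G$ is forward $T$-orbitally lower semicontinuous at $x_{0}$. Conversely, assume $G$ is forward $T$-orbitally lower semicontinuous at $x_{0}$. From the iterated bound above, $\|G(x_{n})\|=\|d(x_{n},x_{n+1})\|\le\|(a^{*})^{n}d(x,Tx)a^{n}\|\le\|a\|^{2n}\|d(x,Tx)\|\to 0$, so $\liminf\|G(x_{n})\|=0$; since $\{x_{n}\}\subseteq\mathcal{O}_{T}(x)$ and $x_{n}\stackrel{f}{\rightarrow}x_{0}$ by part (1), lower semicontinuity forces $\|G(x_{0})\|\le 0$, whence $d(x_{0},Tx_{0})=0_{\mathbb{A}}$ and therefore $x_{0}=Tx_{0}$, again by axiom (1).

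I do not expect a serious obstacle: the computation in the middle paragraph is the one already carried out in the proof of Theorem 3.1, and the rest is bookkeeping along the orbit. The only point that deserves the same care as there is the passage from "$d(x_{p},x_{n})\preceq\lambda_{p}I_{\mathbb{A}}$ with $\lambda_{p}\to 0$" to the order-theoretic $\epsilon$-form of the forward Cauchy condition in Definition 3.5; I would handle it exactly as the paper does in the proof of Theorem 3.1.
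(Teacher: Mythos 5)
Your proposal is correct and follows essentially the same route as the paper: the same iterated bound $d(T^{n}x,T^{n+1}x)\preceq (a^{*})^{n}d(x,Tx)\,a^{n}$ along the orbit, the same telescoping/geometric-series estimate giving a forward Cauchy sequence and a forward limit $x_{0}$, and the same two-line argument in both directions of (2) via $\|G(T^{n}x)\|\le\|a\|^{2n}\|d(x,Tx)\|\to 0$. The extra care you flag about passing from the $\lambda_{p}I_{\mathbb{A}}$-bound to the order-theoretic Cauchy condition is a fair observation, and the paper handles it no more explicitly than you do.
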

\begin{proof}
 We assume that $\mathbb{A}$ is a nontrivial $C^{*}$ -algebra.
\begin{enumerate}

\item Since the above farward contractive condition holds for each element of $\mathcal{O}_{T}(x)$ and $\|a\|<1$,
it follows that:
$$
d\left(T^{n} x, T^{n+1} x\right) \preceq\left(a^{*}\right)^{n} d(x, Tx) a^{n}
.$$
Then for $p<n,$ we have from the triangle inequality  that
$$
\begin{aligned}
d\left(T^{p} x, T^{n+1} x\right)) & \preceq d\left(T^{p} x, T^{p+1} x\right)+d\left(T^{p+1} x, T^{p+2} x\right)+....d\left(T^{n} x, T^{n+1} x\right)) \\
& \preceq \sum_{k=p}^{n}\left(a^{*}\right)^{k} d(x, T x) a^{k}\\
&\preceq \sum_{k=p}^{n}\left\|d\left(x, Tx \right)^{\frac{1}{2}}\right\|^{2}\left\|a^{k}\right\|^{2}. 1_{\mathbb{A}}\\
&\preceq\left\|d\left(x,T x \right)^{\frac{1}{2}}\right\|^{2} \sum_{k=p}^{n}\|a\|^{2 k}. 1_{\mathbb{A}}\\
& \preceq \left\|d\left(x,T x\right)^{\frac{1}{2}}\right\|^{2} \frac{\|a\|^{2 p}}{1-\|a\|^{2}}. 1_{\mathbb{A}} \rightarrow 0_{\mathbb{A}} \quad(p \rightarrow \infty)
\end{aligned}
$$
This shows that $\left\{T^{n} x\right\}$ is a forward Cauchy sequence in $ X$ with respect to $\mathrm{A}$. By forward completeness of $(X, \mathbb{A}, d)$ , there exists some $x_{0} \in X$ such that
$$
T^{n}x \stackrel{f}{\rightarrow} x_{0}
$$
with respect to $\mathbb{A}$. 
¨
\item one suppose that $ T x_{0}=x_{0} $
and $\left\lbrace T^{n}x\right\rbrace $ is a sequence in $\mathcal{O}_{T}(x)$ with $T^{n}x \stackrel{f}{\rightarrow} x_{0}$ with respect to $\mathrm{A}$,
then
$$
\begin{aligned}
\left\|G\left(x_{0}\right)\right\| &=\left\|d\left( x_{0}, Tx_{0}\right)\right\| \\
&=0 \\
& \leq \liminf \Vert G\left(T^{n}x \right)\Vert .
\end{aligned}
$$

Reciprocally, if $G$ is forward $T$ -orbitally lower-semi-continuous at $x_{0}$ then
$$
\begin{aligned}
\left\|G\left(x_{0}\right)\right\| &=\left\|d\left(x_{0},  Tx_{0}\right) \Vert \leq \liminf \right\| G\left(T^{n} x\right) \| \\
&=\liminf \left\|d\left(T^{n} x, T^{n+1} x\right)\right\| \\
& \leq \liminf \|a\|^{2 n}\|d(x, Tx)\| \\
&=0
\end{aligned}
$$
as a result $
d\left(x_{0}, Tx_{0}\right)=0_{\mathrm{A}}$,
proving $T$ has a fixed point. 

\end{enumerate}
\end{proof}
\begin{exa}
 $d: \mathbb{R} \times \mathbb{R} \rightarrow \mathbb{R}$ defined by
$$
d(x, y)=\left\{\begin{array}{ll}
x-y & \text { if } x \geqslant y \\
1 & \text { if } x<y
\end{array}\right.
$$
We consider
$  T  :\mathbb{R} \times \mathbb{R} \rightarrow \mathbb{R}$  such as $Tx =\dfrac{x}{4}$
$$
d(T x, T y)=\left\{\begin{array}{cl}
\frac{1}{4}(x-y) & x \geqslant y \\
1 & x<y
\end{array}\right.
$$
$T$ is not a forward C*-valued contraction.\\
If $x<y$, 
we know $  d(T x, T y) \leqslant a^{*} d(x, y) a$, then
$$
\begin{aligned}
d(T x, T y)  &\leqslant a^{*} d(x, y) a  \\
1 & \leqslant a^{2} \\
1 & \leqslant \Vert a\Vert 
\end{aligned}
$$
therefore contradiction.

We prove that $T$ is forward C*valued contraction Type mapping\\
let $x>0$
$$
\begin{array}{l}
\left\{\begin{array}{l}
d\left(T y, T^{2}y \right)=d\left(\frac{y}{4}, \frac{y}{16}\right)=\frac{3y}{16} \\
d(y, Ty)=\frac{3y}{4}
\end{array}\right. 
\end{array}
$$
then it exists $a$ in  $ \mathbb{A}$ such that
$d\left(T y, T^{2} y\right) \preceq a^{*} d(y, T y) a$  for every $y \in \mathcal{O}_{T}(x)$ 
\\with $\|a\|=|\dfrac{1}{\sqrt{2}}|<1$

Define $G: X \rightarrow A$ by
$$
G(x)=d(x,Tx)
$$
so
$$
\liminf _{x \rightarrow 0} G(x)=G(0)=0
$$
then $G$ is  forwards $T$ -orbitally lower semi continuous at zero. then 0 is a fixed point of $T$.

\end{exa}
\begin{exa}
 Define 
$d: \mathbb{R}\times \mathbb{R}\rightarrow \mathbb{A}=\left\lbrace  \left[\begin{array}{ll}
x& 0 \\
0 & y
\end{array}\right]  \vert ~ x,y \in \mathbb{R}  \right\rbrace  
$ by
$$ d(x,y)=
 \left\{
    \begin{array}{ll}
   
\left[\begin{array}{ll}
x-y & 0 \\
0 & 0
\end{array}\right] & \mbox{if } x \geqslant y \\
\\
 \left[\begin{array}{ll}
0 &  0 \\
0 &  y-x
\end{array}\right]  & \mbox{if } x < y
    \end{array}
\right.$$

with  partial ordering and norm on $\mathbb{A}$ are given as example 3.1
 
We consider
$  T  :\mathbb{R}  \rightarrow \mathbb{R}$  such that 
$$
Tx=\left\{\begin{array}{cl}
\frac{x}{4} & x \geqslant 0 \\
1 & x<0
\end{array}\right.
$$
Then for $y \in \mathcal{O}_{T}(x), x \geq 0$
$$
\begin{aligned}
d\left(T y, T^{2} y\right)&=\left[\begin{array}{cc}
\frac{y}{4}-\frac{y}{16}& 0 \\
0 & 0
\end{array}\right]
=\left[\begin{array}{cc}
\frac{3y}{16} & 0 \\
0 & 0
\end{array}\right]\\
& \preceq \left[\begin{array}{cc}
\frac{1}{\sqrt{3}} & 0 \\
0 & \frac{1}{\sqrt{3}}
\end{array}\right]\left[\begin{array}{cc}
\frac{3 y}{4}& 0 \\
0 & 0
\end{array}\right]\left[\begin{array}{cc}
\frac{1}{\sqrt{3}} & 0 \\
0 & \frac{1}{\sqrt{3}}
\end{array}\right]\\
&=a^{*} d(Ty, y) a
\end{aligned}
$$
where
$$
a=\left[\begin{array}{ll}
\frac{1}{\sqrt{3}} & 0 \\
0 & \frac{1}{\sqrt{3}}
\end{array}\right]\, so\, \|a\|=\frac{\sqrt{2}}{\sqrt{3}}< 1.
$$
\end{exa}
This result is an extension of the theorem  (2.5 .26  \cite{batul2015}\label{batul2015}).
\begin{thm}

 Let $(X, \mathbb{A}, d)$ be a forward complete $C^{*}$ -algebra valued asymetric space and $T: X \rightarrow X$ be a mapping which satisfies for all $y \in \mathcal{O}_{T}(x)$
$$
d\left(T y, T ^{2}y\right) \preceq a \,d\left(y, T^{2} y\right)
$$
with $\|a\| \leq \frac{1}{2}$ and $a \in \mathbb{A}_{+}^{\prime},$ 
Then
\begin{enumerate}
\item $\exists x_{0} \in X$ such that the sequence $T^{n} x$ forward converges to $x_{0}$,
\item $x_{0}$ is a fixed point of $T$ if and only if $G(x)=d(x, Tx)$ is forward $T$ -orbitally lower
semi continuous at $x_{0}$ with respect to $\mathbb{A}$.
\end{enumerate}
\end{thm}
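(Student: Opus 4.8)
The plan is to show that the hypothesis already forces $T$ to be a forward $C^{*}$-algebra valued contractive type mapping, so that both conclusions follow immediately from Theorem 3.2.

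Fix $y\in\mathcal{O}_{T}(x)$. By the triangle inequality, $d(y,T^{2}y)\preceq d(y,Ty)+d(Ty,T^{2}y)$. Multiplying this inequality by the positive central element $a$ keeps it an inequality between positive elements (a commuting product of positive elements is positive, Lemma 2.2(2)), and combining the outcome with the hypothesis $d(Ty,T^{2}y)\preceq a\,d(y,T^{2}y)$ gives
$$d(Ty,T^{2}y)\preceq a\,d(y,Ty)+a\,d(Ty,T^{2}y),$$
that is,
$$(I_{\mathbb{A}}-a)\,d(Ty,T^{2}y)\preceq a\,d(y,Ty).$$

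Now $0_{\mathbb{A}}\preceq a\preceq I_{\mathbb{A}}$ by Lemma 2.1(4), and since $a\succeq 0_{\mathbb{A}}$ with $\|a\|\leq\frac{1}{2}$ the spectrum of $I_{\mathbb{A}}-a$ lies in $[\frac{1}{2},1]$; hence $I_{\mathbb{A}}-a$ is an invertible element of $\mathbb{A}_{+}^{\prime}$ whose inverse is again positive and central. Both sides of the last display are positive (commuting products of positive elements), so Lemma 2.2(3) applies and, after multiplying through by $(I_{\mathbb{A}}-a)^{-1}$,
$$d(Ty,T^{2}y)\preceq b\,d(y,Ty),\qquad b:=(I_{\mathbb{A}}-a)^{-1}a\in\mathbb{A}_{+}^{\prime}.$$
Taking the positive central square root $b^{1/2}$ and using centrality, $b\,d(y,Ty)=(b^{1/2})^{*}\,d(y,Ty)\,b^{1/2}$, so the inequality reads
$$d(Ty,T^{2}y)\preceq (b^{1/2})^{*}\,d(y,Ty)\,b^{1/2}\qquad\text{for all }y\in\mathcal{O}_{T}(x).$$
This is exactly the defining condition of a forward $C^{*}$-valued contractive type mapping with contraction element $b^{1/2}$. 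Theorem 3.2 then yields at once (1) the existence of $x_{0}\in X$ with $T^{n}x\stackrel{f}{\rightarrow}x_{0}$, via the forward Cauchy property of $\{T^{n}x\}$ together with forward completeness, and (2) the equivalence between $Tx_{0}=x_{0}$ and forward $T$-orbital lower semicontinuity of $G(x)=d(x,Tx)$ at $x_{0}$.

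The one point requiring care is the bound $\|b^{1/2}\|<1$ needed to invoke Theorem 3.2. By the functional calculus for the positive central element $a$, $\|b\|=\sup_{t\in\sigma(a)}\frac{t}{1-t}$, which is strictly less than $1$ precisely when $\|a\|<\frac{1}{2}$ — this is the content of Lemma 2.2(1) — so the hypothesis should be read as $\|a\|<\frac{1}{2}$ (the borderline value $\|a\|=\frac{1}{2}$, e.g.\ $a=\frac{1}{2}I_{\mathbb{A}}$, makes $b=I_{\mathbb{A}}$ and destroys the contraction). As an alternative to quoting Theorem 3.2 one may rerun its Cauchy estimate verbatim with $(b^{1/2})^{k}$ in place of $a^{k}$, the only input being $\sum_{k}\|b\|^{k}<\infty$. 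Apart from this, the argument is routine: the remaining checks are that every order manipulation stays inside $\mathbb{A}_{+}$, which is exactly what the assumption $a\in\mathbb{A}_{+}^{\prime}$ secures.
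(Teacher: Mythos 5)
Your argument is correct (with the same caveat about the borderline norm that the paper itself has), but it completes the proof by a genuinely different, more economical route. The first half is the same as the paper's: both you and the author use the triangle inequality $d(y,T^{2}y)\preceq d(y,Ty)+d(Ty,T^{2}y)$, multiply by the central positive $a$, rearrange to $(I_{\mathbb{A}}-a)\,d(Ty,T^{2}y)\preceq a\,d(y,Ty)$, and invert via Lemma 2.2 to get $d(Ty,T^{2}y)\preceq h\,d(y,Ty)$ with $h=a(I_{\mathbb{A}}-a)^{-1}$, $\|h\|<1$ (the paper does this with $y=T^{n-1}x$). From that point the paper re-runs the whole machinery of Theorem 3.2 by hand: it iterates to $d(T^{n}x,T^{n+1}x)\preceq h\,d(T^{n-1}x,T^{n}x)$, redoes the geometric-series Cauchy estimate (implicitly using the same central square roots $h^{k/2}$ you use), invokes forward completeness, and then repeats the lower-semicontinuity equivalence verbatim. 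You instead observe once and for all that, by centrality and positivity, $h\,d(y,Ty)=(h^{1/2})^{*}\,d(y,Ty)\,h^{1/2}$ with $\|h^{1/2}\|<1$, so the hypothesis already makes $T$ a forward $C^{*}$-valued contractive type mapping in the sense of Definition 3.6, and both conclusions drop out of Theorem 3.2 with no further computation. Your reduction buys brevity and makes explicit that Theorem 3.3 is a corollary of Theorem 3.2 rather than a parallel result; the paper's version buys self-containedness (and records the explicit rate $\|h\|^{m}/(1-\|h\|)$). Your remark about the norm bound is also well taken: the statement says $\|a\|\leq\frac{1}{2}$, but both your argument and the paper's proof invoke Lemma 2.2, which requires $\|a\|<\frac{1}{2}$ (for $a=\frac{1}{2}I_{\mathbb{A}}$ one gets $h=I_{\mathbb{A}}$ and the contraction estimate collapses), so the strict inequality is the hypothesis actually being used.
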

\begin{proof}

  Assume that $\mathbb{A} \neq\left\{0_{\mathrm{A}}\right\}$.

$$
\begin{aligned}
d\left(T^{n} x, T^{n+1} x\right) &=d\left(T(T^{n-1} x), T^{n+1} x\right) \\
& \preceq a d\left(T^{n-1} x, T^{n+1} x\right) \\
& \preceq a\left[d\left(T^{n-1} x, T^{n} x\right)+d\left(T^{n} x, T^{n+1} x\right)\right] \\
& \preceq a d\left(T^{n-1} x, T^{n} x\right)+a d\left(T^{n} x, T^{n+1} x\right) .
\end{aligned}
$$
Thus
$$
d\left(T^{n} x, T^{n+1} x\right)-a d\left(T^{n} x, T^{n+1} x\right) \preceq a d\left(T^{n-1} x, T^{n} x\right)
$$
Which implies that
$$
\left(1_{\mathrm{A}}-a\right) d\left(T^{n} x, T^{n+1} x\right) \preceq a d\left(T^{n-1} x, T^{n} x\right)
$$
Since $a \in \mathbb{A}_{+}^{\prime}$ with $\|a\|<\frac{1}{2}$ ,by lemma (2.2) we have
$$
\left(1_{\mathrm{A}}-a\right)^{-1} \in \mathbb{A}_{+}^{\prime}
$$
and also
$$
a\left(1_{\mathrm{A}}-a\right)^{-1} \in \mathbb{A}_{+}^{\prime} \text { with }\left\|a\left(1_{\mathbb{A}}-a\right)^{-1}\right\|<1 .
$$
Therefore,
$$
d\left(T^{n} x, T^{n+1} x\right) \preceq a\left(1_{\mathbb{A}}-a\right)^{-1} d\left(T^{n-1} x, T^{n} x\right)
$$

we pose $h= a\left(1_\mathbb{A}-a\right)^{-1}$ then
$$
d\left(T^{n} x, T^{n+1} x\right) \preceq h d\left(T^{n-1} x, T^{n} x\right)
$$
Let $\left\{T^{n} x\right\}$ be a sequence in $\mathcal{O}_{T}(x) .$ Then from the triangle inequality and $(3.11),$ for $m<n$ we have
which yields to
$$
\begin{aligned}
d\left(T^{m} x, T^{n+1} x\right) & \preceq \sum_{k=m}^{n}\left\|h^{k / 2}\right\|^{2}\left\|d(x,T x)^{1 / 2}\right\|^{2} 1_{\mathbb{A}} \\
& \preceq\left\|d(x,Tx)^{1 / 2}\right\|^{2} \sum_{k=m}^{n}\left\|h^{k / 2}\right\|^{2} 1_{\mathbb{A}} \\
& \preceq\left\|d(x,T x)^{1 / 2}\right\|^{2} \frac{\|h\|^{m}}{1-\|h\|} 1_{\mathbb{A}} \\
& \longrightarrow 0 _{\mathbb{A}} \text { as } m \longrightarrow \infty
\end{aligned}
$$
This proves that $\left\{T^{n} x\right\}$ is a forward Cauchy sequence in $X$ with respect to $\mathbb{A}$ Since $(X, \mathbb{A}, d)$ is a forward complete $C^{*}$ -algebra valued asymetric space, there exists $x_{0} \in X$
such that $T^{n}x \stackrel{f}{\rightarrow} x_{0}$.

 If $T x_{0}=x_{0}$ and $\left\{x_{n}\right\}$ is a sequence in $\mathcal{O}_{T}(x)$ such that
$T^{n}x \stackrel{f}{\rightarrow} x_{0}$ with respect to $\mathrm{A}$,
then
$$
\begin{aligned}
\mid G\left(x_{0}\right) \| &=\left\|d\left( x_{0},T x_{0}\right)\right\| \\
&=0 \\
& \leq \liminf \left\|G\left(x_{n}\right)\right\| .
\end{aligned}
$$
Conversely, if $G$ is $T$ -orbitally lower semi continuous at $x_{0}$ then

$$
\begin{aligned}
\left\|G\left(x_{0}\right)\right\| &=\left\|d\left(x_{0},  Tx_{0}\right) \leq \liminf \right\| G\left(T^{n} x\right) \| \\
&=\liminf \left\|d\left(T^{n} x, T^{n+1} x\right)\right\| \\
& \leq \liminf \|h\|^{ n}\|d(x,T  x)\| \\
&=0
\end{aligned}
$$
this implies that
$$
d\left(x_{0}, T x_{0}\right)=0_{\mathbb{A}}
$$
thus $T$ has a fixed point
\end{proof}
\section{Application}
In this section, we will show how Theorem  can be applied
to prove the existence of solution of integral equation.
Let $G$ be the multiplicative group $ \left] 0;1\right] $ with its left inveriant Haar   measure $\mu$. Defined by :
$$H=L^{2}(G)=\left\lbrace f:G \rightarrow \mathbb{R} \quad \vert \int_{G}\vert f(t)\vert^2 \ d \mu(t)< \infty \right\rbrace \textsl{ which's an Hilbert space} $$  
$$ X=L^{\infty}(G)=\left\lbrace f:G \rightarrow \mathbb{R} ~~~~\vert \quad \Vert f \Vert _{\infty} < \infty \right\rbrace \textsl{ which's a Banach algebra}. $$
Let
$B(H)$ the set of all bounded linear operators  on the Hilbert space $H$. Note that $B(H)$ is a unital C* algebra.We define an asymmetric metric like this :

 $$
\begin{aligned}
d:X\times X &\rightarrow B(H) \\
(f,g) &\rightarrow  d(f,g)
\end{aligned}
$$
 whith
$$ d(f,g) =
 \left\{
    \begin{array}{ll}
   
\pi_{\frac{1}{2}(f-g) \chi_{\left\lbrace f>g \right\rbrace }}+ \pi_{(g-f) \chi_{\left\lbrace g>f \right\rbrace }} \\ \\

0 \; \qquad if \quad  f=g
    \end{array}
\right.$$
 where $\pi_{f}$ is the multiplication operator given by :

$$
\begin{aligned}
\pi_{f} :X &\rightarrow X \\
\psi &\rightarrow  f.\psi
\end{aligned}
$$
 and
 $$  \chi_{A}(t) =
 \left\{
    \begin{array}{ll}
   
1 & if \qquad x \in A\\ \\

0 &  if \qquad x \in A^{c} 
    \end{array}
\right.$$
It's knwon that $ \Vert \pi_{f}\Vert = \Vert f\Vert_{\infty}$.

Here $(X; B(H); d)$ is a complete C*-valued asymetric space with respect to B(H).

Let
$$ \begin{aligned} K : \quad G \times G \times \mathbb{R}&\rightarrow \mathbb{R}\\
(x,y,t)&\rightarrow \alpha.x\dfrac{t}{y^{2}+k} \quad(\alpha > 0 ,k>0)
\end{aligned}$$

Let $$\begin{aligned} T: X  &\rightarrow X\\
f&\rightarrow Tf 
\end{aligned}$$ 
$$
Tf(x)=\int_{G} K(x, y, f(y)) d\mu(y) \quad, x \in G.
$$
Chose $f_{0}$ defind as follows:

$$\begin{aligned} f_{0}: G  &\rightarrow \mathbb{R}\\
x&\rightarrow x 
\end{aligned}$$ 
Then 

$$\begin{aligned}
Tf_{0}(x)& = \int_{G} K\left( x, y, f_{0}(y)\right)  d\mu(y)
\\&
=\int_{0}^{1} \alpha x \frac{y}{y^{2}+k} d\mu( y)
 \\& 
 = \alpha \cdot x \int_{0}^{1} \frac{y }{y^{2}+k}d\mu( y)
  \\& = \frac{\alpha x}{2} \ln \left(\frac{1}{k}+1\right)
 \end{aligned}$$
For a $$k < \dfrac{1}{e^{\frac{\alpha}{2}}-1} \qquad \left( \Rightarrow \frac{\alpha }{2} \ln \left(\frac{1}{k}+1\right)> 1 \right) $$

we will have
$$ Tf_{0}(x)> f_{0}(x) \qquad \left( \forall  x \in X \right) $$
In addition, using simple calculation, one show that
$$ T^{n+1}f_{0}(x)> T^{n}f_{0}(x) \qquad \left( \forall  x \in X ,\quad \forall  n \in\mathbb{N}\right) $$

If we take $g=Tf_{0}$, then 

$$\begin{aligned}
\left\|d\left(Tf_{0}, T^{2} f_{0}\right)\right\| &=\|d(T f_{0}, T g)\| \\
 &=\Vert \pi_{Tg-Tf_{0}}\Vert \qquad  \qquad \left( because \left(Tg > f_{0} \right)=\lbrace x\in G \,\vert \, Tg(x)>f_{0}(x) \rbrace =G \right)  \\ 
 &=\sup _{\| \psi\Vert _{2}=1}\left\langle \left( Tg-Tf_{0}\right)  \psi, \psi \right\rangle, \quad \text { for any } \psi \in H \\ 
 &=\sup _{\| \psi\Vert _{2}= 1} \int_{G} \alpha \int_{G} x \dfrac{g(y)-f_{0}(y)}{y^{2}+k} d\mu(y) \psi(x)^{2}  d \mu(x) \\ 
& \leq  \alpha \Vert g-f_{0}\Vert_{\infty} \sup _{\| \psi]=1} \int_{G}x \psi(x)^{2}  d \mu(x)  \int_{G} \dfrac{1}{y^{2}+k} d\mu(y)\\ 
& \leq  \alpha \Vert g-f_{0}\Vert_{\infty}\dfrac{\arctan\frac{1}{\sqrt{k}} }{\sqrt{k}} \\
 & \leq \dfrac{\alpha}{k}  \Vert g-f_{0}\Vert_{\infty} \end{aligned}$$ 
For $\dfrac{\alpha}{k} < 1 $ we must take 
$$ \alpha < \dfrac{1}{e^{\frac{\alpha}{2}}-1} \Leftrightarrow \alpha e^{\frac{\alpha}{2}}-\alpha-1< 0$$

whisch is possible because
$$ \lim\limits_{x \rightarrow +\infty} \alpha e^{\frac{\alpha}{2}}-\alpha-1 = +\infty $$
and $\alpha \rightarrow \alpha e^{\frac{\alpha}{2}}-\alpha-1 $ is a continuous fonction, wich take $-1$ at $\alpha=0$.\\
We will have $$\left\|d\left(Tf_{0}, T^{2} f_{0}\right)\right\| \leq \lambda \Vert d( f_{0},Tf_{0})\Vert $$
with $ \lambda = \alpha \frac{\arctan\frac{1}{\sqrt{k}} }{\sqrt{k}}$ and $\lambda < 1 $.

Therefore the condition of the theorem(3.2)  of is verified which ensures the forward convergence of $T^{n}f_{0}$ to $\tilde{f}$ in $X$ with respect $\mathbb{A}$. 
 Remains to verify that $\tilde{f}$ is a fixed point. It will suffice to verify that $G$ is forward $T$ -orbitally lower-semi-continuous at $\tilde{f}$. 
 $$
\begin{aligned}
\left\|G\left(\tilde{f}\right)\right\| &=\left\|d\left(\tilde{f},  T\tilde{f}\right) \leq \liminf \right\| G\left(T^{n} f_{0}\right) \| \\
&=\liminf \left\|d\left(T^{n} f_{0}, T^{n+1} f_{0}\right)\right\| \\
&\leqslant \liminf \left( \frac{\alpha }{2} \ln \left(\frac{1}{k}+1\right)\right)^{n}\left( \frac{\alpha }{2} \ln \left(\frac{1}{k}+1\right)-1 \right) \left( = +\infty\right) 
\end{aligned}
.$$
\textbf{Conclusion}: the  intégral equation $ f(x) = \int_{G} K\left( x, y, f(y)\right)  d\mu(y)$ admits a solution.
\bibliography{fixed_point}

\begin{thebibliography}{10}

\bibitem{aminpour2012some}
AM~Aminpour, S~Khorshidvandpour, and M~Mousavi.
\newblock Some results in asymmetric metric spaces.
\newblock {\em Mathematica Aeterna}, 2:533--540, 2012.

\bibitem{batul2015}
Kamran~T. Batul~S.
\newblock C*-valued contractive type mappings.
\newblock {\em Fixed Point Theory Appl}, 142, 2015.

\bibitem{asymmetric2007}
J~Collins and J~Zimmer.
\newblock An asymmetric arzela-ascoli theorem.
\newblock {\em Topology and Its Applications}, 154:2312--2322, jun 2007.

\bibitem{ma2014onclick}
ZH~Ma, LN~Jiang, and HK~Sun.
\newblock C*-algebra-valued metric spaces and related fixed point theorems.
\newblock {\em Fixed Point Theory Appl}, 2014, 2014.

\bibitem{mainik2005existence}
Andreas Mainik and Alexander Mielke.
\newblock Existence results for energetic models for rate-independent systems.
\newblock {\em Calculus of Variations and Partial Differential Equations},
  22(1):73--100, 2005.

\bibitem{mennucci2004asymmetric}
Andrea Mennucci.
\newblock On asymmetric distances.
\newblock {\em URL http://cvgmt. sns. it/papers/and04/. 2nd version}, 2004.

\bibitem{mielke2003rate}
Alexander Mielke and Tom{\'a}s Roub{\'\i}cek.
\newblock A rate-independent model for inelastic behavior of shape-memory
  alloys.
\newblock {\em Multiscale Modeling \& Simulation}, 1(4):571--597, 2003.

\bibitem{murphy2014c}
Gerald~J Murphy.
\newblock {\em C*-algebras and operator theory}.
\newblock Academic press, 2014.

\bibitem{rieger2005young}
Marc~Oliver Rieger and Johannes Zimmer.
\newblock Young measure flow as a model for damage.
\newblock {\em SIAM J. Math. Anal}, 2005.

\bibitem{wilson1931quasi}
Wallace~Alvin Wilson.
\newblock On quasi-metric spaces.
\newblock {\em American Journal of Mathematics}, 53(3):675--684, 1931.

\end{thebibliography}

\bibliographystyle{plain}

\end{document}